\documentclass{amsart}

\title{The Picard group of the category of $C_n$-equivariant stable homotopy theory}
\author{Vigleik Angeltveit}
\address{Mathematical Sciences Institute \\
Australian National University \\
Canberra, ACT 0200 \\
Australia}

\usepackage{amsxtra}
\usepackage{amsfonts}
\usepackage[latin1]{inputenc}
\usepackage{graphicx}
\usepackage{amsmath,amssymb,latexsym,amsthm,mathrsfs}
\usepackage[all]{xy}
\usepackage{pstricks}
\usepackage{verbatim}

\newtheorem{theorem}{Theorem}[section]
\newtheorem{thm}[theorem]{Theorem}
\newtheorem{lemma}[theorem]{Lemma}
\newtheorem{corollary}[theorem]{Corollary}

\newtheorem{prop}[theorem]{Proposition}

\theoremstyle{definition}
\newtheorem{defn}[theorem]{Definition}
\newtheorem{remark}[theorem]{Remark}
\newtheorem{example}[theorem]{Example}

\makeatletter
\let\c@equation\c@theorem
\makeatother
\numberwithin{equation}{section}

\newtheorem{lettertheorem}{Theorem}

\pagestyle{plain}

\newcommand{\cA}{\mathcal{A}}              \newcommand{\cO}{\mathcal{O}}


  \newcommand{\bC}{\mathbb{C}}             
         \newcommand{\bZ}{\mathbb{Z}}

\newcommand{\sma}{\wedge} 

\newcommand{\xto}{\xrightarrow}


\newcommand{\pic}{\textnormal{Pic}}

\newcommand{\un}{\underline}
\newcommand{\Gsp}{Sp^G}
\newcommand{\Csp}{Sp^{C_n}}
\newcommand{\psp}{Sp^{C_p}}
\newcommand{\tr}{\textnormal{tr}}

\newcommand{\Spec}{\textnormal{Spec}}
\newcommand{\Mack}{\textnormal{Mack}}
\newcommand{\Mod}{\textnormal{Mod}}
\newcommand{\lsup}[1]{{}^{#1} \!\!}

\begin{document}

\begin{abstract}
For a finite group $G$, there is a map $RO(G) \to \pic(\Gsp)$ from the real representation ring of $G$ to the Picard group of $G$-spectra. This map is not known to be surjective in general, but we prove that when $G$ is cyclic this map is indeed surjective and in that case we describe $\pic(\Gsp)$ explicitly.

We also show that for an arbitrary finite group $G$ homology and cohomology with coefficients in a cohomological Mackey functor do not see the part of $\pic(\Gsp)$ coming from the Picard group of the Burnside ring. Hence these homology and cohomology calculations can be graded on a smaller group.
\end{abstract}

\maketitle

\section{Introduction}
The Picard group of the category of $G$-spectra is the group of isomorphism classes of invertible $G$-spectra (in the homotopy category) under the smash product. It has been studied by tom Dieck, Petrie, Fausk, Lewis, and May among others. See for example \cite{tD78b, tD78, tDP78, tD79, tD87, Ma01, FLM}.

There is a natural map $RO(G) \to \pic(\Gsp)$ from the real representation ring of $G$ to the Picard group of $G$-spectra, sending a virtual representation to the isomorphism class of the corresponding virtual representation sphere. We prove the following result:

\begin{lettertheorem} \label{t:A}
Let $C_n$ denote the cyclic group of order $n$. The map
\[
 RO(C_n) \to \pic(\Csp)
\]
is surjective, and
\[
 \pic(\Csp) \cong \prod_{\substack{d \mid n \\ d \neq 1, 2}} (\bZ/d)^\times/\{\pm 1\} \times \prod_{d \mid n} \bZ.
\]
\end{lettertheorem}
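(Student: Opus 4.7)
My plan is to construct a split short exact sequence
\[
 0 \to \pic(A(C_n)) \to \pic(\Csp) \to \prod_{d \mid n} \bZ \to 0,
\]
in which the right-hand quotient is detected by the geometric fixed point dimensions $n_d = \dim \Phi^{C_d}(X)$, suitably reparametrized so that representation spheres realize a basis, and the kernel is the Picard group of the Burnside ring Mackey functor $A(C_n)$. Both the splitting and the right-hand surjectivity will be witnessed by virtual representations from $RO(C_n)$, so the surjectivity of $RO(C_n) \to \pic(\Csp)$ will fall out of the construction.

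For the geometric quotient, each $\Phi^{C_d}$ sends an invertible $C_n$-spectrum to an invertible (nonequivariant) spectrum, hence to a sphere, producing the integer invariants $n_d$. I would then exhibit enough representation spheres to realize a basis of $\prod_{d \mid n} \bZ$ after an appropriate change of coordinates on the target; the trivial representation, the sign representation $\sigma$ (when $n$ is even), and $\lambda_d^1$ for each divisor $d \mid n$ with $d > 2$ form a suitable generating set, where $\lambda_d^k$ denotes the two-dimensional real representation of $C_n$ on which a chosen generator acts by rotation through $2\pi k/d$.

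The algebraic kernel $\pic(A(C_n))$ will be computed via the ghost-component and arithmetic-fracture description of $A(C_n)$. The ghost map $A(C_n) \to \prod_{d \mid n} \bZ$ realizes $A(C_n)$ as a pullback with finite residue quotients, and the resulting Picard sequence gives $\pic(A(C_n)) \cong \prod_{d \mid n,\, d \neq 1, 2} (\bZ/d)^\times/\{\pm 1\}$; the sign quotient reflects that conjugate complex characters of $C_d$ realize the same real representation. The identification of this group with the kernel of the dimension map in $\pic(\Csp)$ proceeds by isotropy separation: a $C_n$-spectrum with vanishing $\Phi^{C_d}$-dimension for every $d \mid n$ should be determined up to equivalence by its zeroth Mackey functor, which must then be an invertible module over $A(C_n)$.

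To realize the algebraic part through $RO(C_n)$, for each divisor $d > 2$ and each $k \in (\bZ/d)^\times$ the virtual representation $\lambda_d^k - \lambda_d^1$ has vanishing $\Phi^{C_{d'}}$-dimension for every $d' \mid n$ and so lies in the algebraic kernel; the isomorphism $\lambda_d^k \cong \lambda_d^{-k}$ of real representations makes the assignment $k \mapsto \lambda_d^k - \lambda_d^1$ well-defined on $(\bZ/d)^\times/\{\pm 1\}$, and a character calculation identifies its image with the $d$-indexed generator of $\pic(A(C_n))$. The principal obstacle is the kernel identification in the short exact sequence: the isotropy-separation induction must isolate each $(\bZ/d)^\times/\{\pm 1\}$-factor cleanly, without mixing contributions across divisors, and the Burnside-ring computation must produce exactly the $\{\pm 1\}$ quotient rather than all of $(\bZ/d)^\times$ or a finer quotient. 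The totally ordered subgroup lattice of a cyclic group makes this tractable, but carrying it out requires careful bookkeeping of the layerwise Picard contributions and of the sign ambiguity at each $d$.
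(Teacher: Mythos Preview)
Your overall architecture matches the paper's: both arguments use the Fausk--Lewis--May style sequence
\[
 0 \to \pic(A(C_n)) \to \pic(\Csp) \xrightarrow{d} C(C_n),
\]
compute $\pic(A(C_n))$ from the ghost map and the units of $A(C_n)$, and split the image of $d$ by explicit representation spheres. Your proposed identification of the kernel via isotropy separation is essentially what underlies the cited FLM result, and your computation of $\pic(A(C_n))$ via the ghost/fracture description is the same as the paper's use of tom Dieck's four-term sequence together with the Cauchy--Frobenius--Burnside congruences.

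There is, however, one genuine gap. You assert that representation spheres ``realize a basis of $\prod_{d\mid n}\bZ$ after an appropriate change of coordinates on the target'', but you never argue that the image of $d$ on \emph{all} of $\pic(\Csp)$ is no larger than the subgroup hit by $RO(C_n)$. Concretely: for $d>2$ every virtual representation has $\dim\Phi^{C_{n/d}}$ even, and nothing in your outline rules out an invertible $X$ with $\dim\Phi^{C_{n/d}}(X)$ odd. Until you know that the dimension function of an arbitrary invertible $C_n$-spectrum lies in the span of the dimension functions of representation spheres, you cannot carry out the step ``subtract a representation sphere with the same dimension function and land in the kernel'', and hence you cannot deduce surjectivity of $RO(C_n)\to\pic(\Csp)$ or pin down the second factor as exactly $\prod_{d\mid n}\bZ$. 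The paper closes this gap by invoking tom Dieck's theorem that the image of $d$ consists precisely of the Borel--Smith functions and that, for nilpotent $G$, the composite $RO(G)\to\pic(\Gsp)\to C(G)$ already surjects onto these. This is a nontrivial input and is not a consequence of the isotropy-separation bookkeeping you describe; you should either cite it or indicate how you intend to prove it.

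A smaller point: the $\{\pm 1\}$ quotient in $\pic(A(C_n))$ does not come from the identification $\lambda_d^k\cong\lambda_d^{-k}$ of real representations (that only explains why your map from $RO$ is well-defined on the quotient). In the Picard computation it arises from the cokernel of $A(C_n)^\times \to C(C_n)^\times$, and the paper has to treat $n$ odd and $n$ even separately because for even $n$ there is an additional unit $\tau=[C_n/C_{n/2}]-[C_n/C_n]$ in $A(C_n)$. You will need this case split to get the exponent of the $\{\pm 1\}$-quotient right.
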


Here the first factor comes from $\pic$ of the Burnside ring of $C_n$ and the second factor is related to the dimension of the virtual representation.

The Picard group provides a natural home for grading equivariant stable homotopy groups. Making the notion of $\pic(\Gsp)$-graded homotopy groups of a $G$-spectrum precise depends on some choices, but Dugger \cite{Du14} proves that it is possible to make coherent choices and obtain well defined homotopy groups.

This means that rather than grading $C_n$-equivariant stable homotopy groups on $RO(C_n)$ we can grade them on the much smaller group $\pic(\Csp)$. When considering homology or cohomology with coefficients in a cohomological Mackey functor, further simplifications are possible.

\begin{lettertheorem} \label{t:B}
Let $G$ be a finite group, suppose $[X]$ is in the kernel of $d : \pic(\Gsp) \to C(G)$, and suppose $\un{M}$ is a cohomological Mackey functor. Then
\[
 X \sma H\un{M} \simeq H\un{M}.
\]
\end{lettertheorem}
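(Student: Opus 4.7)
My plan is to show that $X \sma H\un{M}$ is an Eilenberg--MacLane spectrum whose zeroth Mackey functor is $\un{M}$, so that it must be equivalent to $H\un{M}$. Throughout, I think of $X$ as a twisted sphere whose twist lives purely in the Picard group of the Burnside Mackey functor $\un{A}$, because the condition $[X] \in \ker(d)$ forces each geometric fixed point $\Phi^H X$ to be a zero-dimensional sphere.

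First, I would extract from $[X] \in \ker(d)$ an invertible Mackey functor $\un{L} := \un{\pi}_0(X)$ in the Picard group $\pic(\un{A})$. Since $X$ is invertible in $\Csp$ and its geometric fixed points are all $S^0$, the graded Mackey functor $\un{\pi}_*(X)$ looks pointwise like $\un{\pi}_*(S^0)$ but with a twisted Mackey functor structure, and in particular is flat over $\un{\pi}_*(S^0)$ as a graded Mackey functor.

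Second, I would compute $\un{\pi}_*(X \sma H\un{M})$ by a K\"unneth argument: flatness of $\un{\pi}_*(X)$ over $\un{\pi}_*(S^0)$ kills the higher Tor terms, and since $\un{\pi}_0(H\un{M}) = \un{M}$ is concentrated in degree zero, the answer is $\un{L} \Box_{\un{A}} \un{M}$ in degree zero and vanishes elsewhere. Equivalently, one can factor the smash product as $X \sma H\un{M} \simeq (X \sma H\un{A}) \sma_{H\un{A}} H\un{M}$, identify $X \sma H\un{A}$ with $H\un{L}$ using the fact that it is an invertible $H\un{A}$-module with trivial geometric fixed-point dimensions, and conclude $X \sma H\un{M} \simeq H(\un{L} \Box_{\un{A}} \un{M})$.

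Third, and this is the crux, I would prove that $\un{L} \Box_{\un{A}} \un{M} \cong \un{M}$ for every cohomological $\un{M}$ and every $\un{L} \in \pic(\un{A})$. The idea is that elements of $\pic(\un{A})$ are built from Burnside ring units coming from the factors $(\bZ/d)^\times/\{\pm 1\}$, and the defining relation $\tr^H_K \res^H_K = [H:K]$ of a cohomological Mackey functor forces the action of these units to ``linearize'' to $\pm 1$. One rigorous way to do this is to invoke the Yoshida-type characterization of cohomological Mackey functors as modules over a quotient of the Mackey algebra (essentially fixed-point functors of $\bZ[G]$-modules), and observe that under this quotient the Picard group of $\un{A}$ dies.

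This third step is the main obstacle: it is the only place where ``cohomological'' is actually used, and it amounts to a statement about how the finite Picard part of $\pic(\un{A})$ interacts with the cohomological quotient of the Burnside category. Once it is in hand, both $X \sma H\un{M}$ and $H\un{M}$ are Eilenberg--MacLane spectra with zeroth Mackey functor $\un{M}$, and the claimed equivalence follows from the uniqueness of equivariant Eilenberg--MacLane spectra.
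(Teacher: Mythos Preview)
Your outline is the same as the paper's through the reduction to showing $\un{L} \Box_{\un{\cA}} \un{M} \cong \un{M}$ for $\un{L} = \un{\pi}_0(X)$, but the third step---which you rightly flag as the crux---is not actually carried out, and your two hints do not close the gap. The first hint, that elements of $\pic(\un{\cA})$ are ``built from the factors $(\bZ/d)^\times/\{\pm 1\}$'', is the description of $\pic(A(C_n))$ from Theorem~A; Theorem~B is stated for an \emph{arbitrary} finite group $G$, where no such description is available. The second hint (Yoshida: cohomological Mackey functors are exactly the $\un{\bZ}$-modules) is the correct reduction, but it only rephrases the problem as ``the image of $\pic(\un{\cA})$ in $\pic(\un{\bZ}\textnormal{-Mod})$ is trivial''; you still have to compute that Picard group, and that is where all the work lies.

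The paper does exactly this. Using that a cohomological $\un{M}$ is a $\un{\bZ}$-module one writes $\un{L} \Box \un{M} \cong (\un{L} \Box \un{\bZ}) \Box_{\un{\bZ}} \un{M}$, so it suffices to prove $\un{L} \Box \un{\bZ} \cong \un{\bZ}$, i.e.\ that $\pic(\un{\bZ}\textnormal{-Mod})$ is trivial. An invertible $\un{\bZ}$-module $\un{M}_1$ has $\un{M}_1(G/H) \cong \bZ$ for every $H$ and is determined by the integers $R^H_K$. For a $p$-group, induct on $|G|$: assuming $\downarrow^G_{P_i} \un{M}_1 \cong \un{\bZ}$ for each index-$p$ subgroup $P_i$, the double coset formula for $R^G_{P_j}\circ \tr_{P_i}^G$ forces either all $R^G_{P_i}=\pm 1$ (so $\un{M}_1\cong\un{\bZ}$) or all $R^G_{P_i}=\pm p$ (which persists under $\Box$ and contradicts invertibility). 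For general $G$, restrict to the Sylow subgroups $P_1,\dots,P_n$; then $R^G_{P_i}$ is the same integer $N$ for each $i$, and $N\mid [G:P_i]$ for all $i$ forces $N=\pm 1$. This elementary computation is the missing ingredient in your proposal.

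A smaller point about your second step: the claim that $\un{\pi}_*(X)$ is flat over $\un{\pi}_*(S^0)$ as a graded Mackey functor is not justified by the observation that each $X^H\simeq S^0$; that only gives a levelwise isomorphism of abelian groups, not control over the Mackey structure in positive degrees. Your alternative route via $X \sma H\un{\cA}$ is the right one: both $X$ and $X^{-1}$ are connective (all fixed points are $S^0$), so $X\sma H\un{\cA}$ and $X^{-1}\sma H\un{\cA}$ are connective invertible $H\un{\cA}$-modules whose smash over $H\un{\cA}$ is $H\un{\cA}$, forcing each to have homotopy concentrated in degree~$0$; hence $X\sma H\un{\cA}\simeq H\un{L}$ and $X\sma H\un{M}\simeq H(\un{L}\Box\un{M})$.
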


Here $C(G)$ is the abelian group of functions from the set of conjugacy classes of subgroups of $G$ to $\bZ$ under pointwise addition. By \cite[Theorem 0.1]{FLM} we have an exact sequence
\[
 0 \to \pic(A(G)) \to \pic(\Gsp) \xto{d} C(G),
\]
where $A(G)$ is the Burnside ring of $G$. We can interpret Theorem \ref{t:B} as saying that homology and cohomology with coefficients in a cohomological Mackey functor do not see the part of $\pic(\Gsp)$ coming from $\pic(A(G))$.

\begin{example}
Consider the cyclic group $C_p$ for an odd prime $p$. Then $RO(C_p) \cong \bZ^{(p-1)/2}$. Theorem \ref{t:A} says that instead of grading $C_p$-equivariant stable homotopy groups on $\bZ^{(p-1)/2}$ we can grade them on $(\bZ/p)^\times/\{\pm 1\} \times \bZ^2$. If $p=3$ there is no difference, but for large $p$ the group $\pic(\psp)$ is much smaller.

Moreover, Theorem \ref{t:B} says that homology or cohomology groups with coefficients in a cohomological Mackey functor can be graded on just $\bZ^2$ regardless of the prime.
\end{example}

\begin{remark}
In the existing literature the reduction from grading on $RO(G)$ to grading on a smaller group is usually done after localizing at $p$. See for example \cite[Prop.\ 2.1]{HHR17} (and the sentence just before that, which emphasizes the $p$-local context).
\end{remark}

\subsection{Conventions}
We assume that the reader is familiar with the theory of Mackey functors and with equivariant stable homotopy theory. Any category of ``genuine'' $G$-equivariant spectra with the usual homotopy category will do. For concreteness, we will work in the category of ortogonal $G$-spectra as described in \cite{HHR}.

We will use notation like $\un{M}$ for a Mackey functor, and write $\un{M}(G/H)$ for the value of $\un{M}$ at the subgroup $H$. Given subgroups $K, L \leq H \leq G$ we have a transfer map $\tr_K^H : \un{M}(G/K) \to \un{M}(G/H)$ and a restriction map $R^H_L : \un{M}(G/H) \to \un{M}(G/L)$. The composite $R^H_L \circ \tr_K^H$ can be computed using a ``double coset formula''. Each $\un{M}(G/H)$ also has an action of $N_H(G)/H$, and there are conjugation maps between $\un{M}(G/H)$ and $\un{M}(G/gHg^{-1})$, but these will play no role in the current paper. See for example \cite{We00} for details.

\subsection{Organization}
In Section \ref{s:picBurnside} we compute $\pic(A(C_n))$ and in Section \ref{s:imd} we complete the proof of Theorem \ref{t:A}.

In Section \ref{s:picMackey} we describe explicit representatives of $\pic(A(C_n))$, and in Section \ref{s:pi0calc} we describe explicit $C_n$-spectra representing $\pic(\Csp)$.

In Section \ref{s:coherence} we explain how our choices from the previous two sections give a $\pic(\Csp)$-trivialisation of $\Csp$ in the sense of Dugger \cite{Du14}, and we explain in what sense $\pic(\Csp)$-graded stable homotopy groups of a homotopy commutative $G$-spectrum are ``graded commutative''.

Finally, in Section \ref{s:cohomMackey} we prove Theorem \ref{t:B}. This turns out to boil down to a calculation of the Picard group of $H\un{\bZ}$.

\subsection{Acknowledgements}
The author would like to thank Mike Hill and Anna Marie Bohmann for interesting discussions, and James Borger and Anand Deopurkar for help with the algebraic geometry in Section \ref{s:picBurnside}.

\section{The Picard group of the Burnside ring} \label{s:picBurnside}
Much of the material in this section can be found in \cite{tD78} and \cite{tDP78}, using slightly different language. But we have been unable to find the exact calculations we need in the literature. Let $G$ be a finite group. Suppose $G$ has $r$ conjugacy classes of subgroups represented by $H_1,\ldots,H_r$. If we need to be explicit about it we will let $H_1 = G$ and $H_r = e$.

We let $C(G) \cong \bZ^r$ denote the abelian group of functions from the set of conjugacy classes of subgroups to $\bZ$ under pointwise addition. In fact it will be convenient to consider $C(G)$ as a ring under pointwise addition and multiplication.

Let $A(G)$ denote the Burnside ring of $G$, which as an abelian group is free on $[G/H_1],\ldots,[G/H_r]$. The multiplication is given by decomposing $G/H_i \times G/H_j$ into orbits, and $[G/H_1] = [G/G]$ is the multiplicative identity. Let
\[
 w : A(G) \to C(G)
\]
be the Burnside ghost map, given on a finite $G$-set $X$ by $X \mapsto \big( H_i \mapsto |X^{H_i}| \big)$,
or $X \mapsto \big(|X^{H_1}|,\ldots,|X^{H_r}|\big)$. This map is injective, and the ring structure on $A(G)$ is determined by requiring that $w$ is a ring map.

\begin{remark}
Recall from \cite[Corollary 2]{DS88} that the Burnside ring $A(G)$ is isomorphic to the ring $W_G(\bZ)$ of $G$-Witt vectors of $\bZ$. If we represent an element in $A(G)$ by $(a_1,\ldots,a_r)$ where $a_i$ denotes the coefficient of $[G/H_i]$ then the Burnside ghost map is given by $(a_1,\ldots,a_r) \mapsto \langle x_1,\ldots,x_r\rangle$ with
\[
 x_i = \sum \big|(G/H_j)^{H_i}\big| a_j, 
\]
with the sum being over all $j$ so that $H_i$ is subconjugate to $H_j$. This can be compared to the Witt vector ghost map, which is given by the same formula except with
\[
 x_i = \sum \big|(G/H_j)^{H_i}\big| a_j^{[H_i:H_j]}.
\]
(Here $[H_i:H_j]$ denotes the index of the appropriate conjugate of $H_i$ in $H_j$.)
\end{remark}

Let $X=\Spec(C(G))$, let $Y = \Spec(A(G))$, and let $f : X \to Y$ be $\Spec$ of the Burnside ghost map. If we complete at a prime $p$ which does not divide $|G|$, $f$ is an isomorphism. Now we can consider the following short exact sequence of sheaves of abelian groups on $Y$, with $Q$ defined as the quotient:
\[
 0 \to \cO_Y^\times \to f_*(\cO_X^\times) \to Q \to 0.
\]
The sheaf $Q$ is concentrated at those $p$ which divide $|G|$. Because $f$ is finite, each $Q^\wedge_p$ is finite and it follows that the global sections of $Q$ is a finite abelian group.

\begin{thm}[tom Dieck {\cite[Section 4]{tD78}}]
For any finite group $G$, the Picard group of $A(G)$ sits in a $4$-term exact sequence
\[
 0 \to A(G)^\times \to (\bZ^r)^\times \to \Gamma(Y,Q) \to \pic(A(G)) \to 0.
\]
\end{thm}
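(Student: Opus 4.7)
The plan is to apply Zariski cohomology to the short exact sequence of sheaves of units $0 \to \cO_Y^\times \to f_*\cO_X^\times \to Q \to 0$ on $Y$, and identify each term of the resulting long exact sequence. That sequence begins
\[
 0 \to H^0(Y,\cO_Y^\times) \to H^0(Y, f_*\cO_X^\times) \to \Gamma(Y,Q) \to H^1(Y,\cO_Y^\times) \to H^1(Y, f_*\cO_X^\times).
\]
Since $Y = \Spec(A(G))$ is affine, $H^0(Y,\cO_Y^\times) = A(G)^\times$ and $H^1(Y,\cO_Y^\times) = \pic(A(G))$. Since $f_*$ commutes with global sections, $H^0(Y, f_*\cO_X^\times) = H^0(X, \cO_X^\times) = C(G)^\times = (\bZ^r)^\times$. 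These identifications recover the first four terms of the claimed sequence, and to terminate the sequence at $\pic(A(G)) \to 0$ it remains only to show $H^1(Y, f_*\cO_X^\times) = 0$.

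To obtain that vanishing I would invoke the Leray spectral sequence $E_2^{p,q} = H^p(Y, R^q f_* \cO_X^\times) \Rightarrow H^{p+q}(X, \cO_X^\times)$, which provides an injection
\[
 H^1(Y, f_*\cO_X^\times) \hookrightarrow H^1(X, \cO_X^\times) = \pic(C(G)) = \pic(\bZ^r)
\]
as soon as $R^1 f_* \cO_X^\times = 0$. The target itself is zero because $\bZ$ is a PID and $\Spec(\bZ^r)$ is a disjoint union of $r$ copies of $\Spec(\bZ)$, so $\pic(\bZ^r) = \pic(\bZ)^r = 0$.

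The remaining step, which I expect to be the main technical obstacle, is the vanishing of $R^1 f_* \cO_X^\times$. Because $\cO_X^\times$ is not quasi-coherent one cannot quote a naive base-change formula; instead I would compute stalks. For $y \in Y$ the stalk $(R^1 f_* \cO_X^\times)_y$ can be identified with $\pic(\cO_{Y,y} \otimes_{A(G)} C(G))$, and since $f$ is finite this ring is a finite module over the local ring $\cO_{Y,y}$, hence semilocal. The Picard group of any semilocal ring vanishes (every finitely generated projective of constant rank over such a ring is free), so the stalks of $R^1 f_* \cO_X^\times$ all vanish and the desired four-term exact sequence drops out of the long exact sequence.
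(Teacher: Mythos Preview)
Your argument is correct and is essentially the standard derivation of this exact sequence; the paper itself does not give a proof here but simply attributes the result to tom Dieck. One simplification: the step where you verify $R^1 f_* \cO_X^\times = 0$ is unnecessary. The five-term exact sequence of the Leray spectral sequence always begins
\[
 0 \to H^1(Y, f_*\cO_X^\times) \to H^1(X, \cO_X^\times) \to H^0(Y, R^1 f_*\cO_X^\times) \to \cdots,
\]
so the edge map $H^1(Y, f_*\cO_X^\times) \hookrightarrow H^1(X, \cO_X^\times) = \pic(\bZ^r) = 0$ is injective regardless of whether $R^1 f_*\cO_X^\times$ vanishes. Your stalk computation (identifying $(R^1 f_*\cO_X^\times)_y$ with the Picard group of the semilocal ring $C(G)\otimes_{A(G)}\cO_{Y,y}$, via the fact that $\pic$ commutes with filtered colimits of rings) is valid, but you can drop it entirely.
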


\begin{corollary}
For any finite group $G$ the Picard group $\pic(A(G))$ is finite.
\end{corollary}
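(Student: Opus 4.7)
The corollary is essentially an immediate consequence of the preceding theorem combined with the observation made just above it. The plan is to read off finiteness of $\pic(A(G))$ as the cokernel of a map into a finite abelian group.

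Specifically, the $4$-term exact sequence
\[
 0 \to A(G)^\times \to (\bZ^r)^\times \to \Gamma(Y,Q) \to \pic(A(G)) \to 0
\]
exhibits $\pic(A(G))$ as a quotient of $\Gamma(Y,Q)$. The text above the theorem has already argued that $\Gamma(Y,Q)$ is a finite abelian group: the sheaf $Q$ is concentrated at the (finitely many) primes $p$ dividing $|G|$, and since $f : X \to Y$ is finite each $p$-completion $Q^\wedge_p$ is finite, so the global sections are finite as well. A quotient of a finite abelian group is finite, which gives the conclusion.

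The only step that requires any care is the justification that $\Gamma(Y,Q)$ is genuinely finite, but this is stated in the paragraph just preceding the theorem and so is available to us here. So the corollary requires no new ideas beyond extracting $\pic(A(G))$ from the exact sequence.
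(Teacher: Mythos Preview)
Your argument is correct and matches the paper's implicit reasoning: the corollary is stated without proof precisely because it follows at once from the exact sequence together with the finiteness of $\Gamma(Y,Q)$ established just before the theorem. There is nothing to add.
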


tom Dieck goes on to discuss a procedure for computing $\Gamma(Y,Q)$ when $G$ is abelian, but we can actually compute $\Gamma(Y,Q)$ in general. Because $Q$ is concentrated at a finite set of primes,
$\Gamma(Y,Q) \cong \prod_p Q_p$ is a product of stalks. And the stalk $Q_p$ can be computed as the cokernel
\[
 (A(G) \otimes \bZ_p)^\times \xto{w} (\bZ_p^\times)^r \to Q_p.
\]

Now we put this together with the ``Cauchy-Frobenius-Burnside'' relations, which say that $x$ is in the image of the Burnside ghost map if and only if, for each $i$, the congruence
\[
 \sum_{gH_i \in N_G(H_i)/H_i} x(H_i,g) \equiv 0 \qquad \mod [N_G(H_i):H_i]
\]
holds, where $x(H_i,g)$ is the value of $x$ on the subgroup generated by $H_i$ and $g$. From this we conclude that
\[
 \Gamma(Y,Q) \cong \prod_{i=1}^r \big(\bZ/[N_G(H_i):H_i]\big)^\times,
\]
where the product is over the conjugacy classes of subgroups of $G$ and we interpret $(\bZ/1)^\times$ as the trivial group.

At this point we specialise to $G=C_n$ and compute the first factor $\pic(A(C_n))$ from Theorem \ref{t:A}:

\begin{thm} \label{t:picBurnside}
The Picard group of the Burnside ring $A(C_n)$ is given by
\[
 \pic(A(C_n)) \cong \prod_{\substack{ d \mid n \\ d \neq 1, 2}} \big( \bZ/d \big)^\times/\{\pm 1\}.
\]
\end{thm}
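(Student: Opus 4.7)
The plan is to apply the four-term exact sequence just stated, specialised to $G = C_n$. Since $C_n$ is abelian, every subgroup $H_d$ (of order $d$) is normal with $[N_{C_n}(H_d):H_d] = n/d$, so the general formula yields
\[
\Gamma(Y,Q) \;\cong\; \prod_{d \mid n} (\bZ/d)^\times
\]
after reindexing $d \leftrightarrow n/d$; the factors for $d = 1$ and (when $n$ is even) $d = 2$ are already trivial. The map $(\bZ^r)^\times = \{\pm 1\}^r \to \Gamma(Y,Q)$ is built multiplicatively from the input signs (by tracing through the Cauchy-Frobenius-Burnside identification at each prime), hence automatically lands in $\prod_d \{\pm 1\} \subseteq \prod_d (\bZ/d)^\times$. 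For $n = p^k$ the identification comes out explicitly as $(\epsilon_0,\ldots,\epsilon_k) \mapsto (\epsilon_i/\epsilon_{i+1})_{i=0}^{k-1}$, which plainly surjects onto $\prod_d \{\pm 1\}$.

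The main obstacle is to establish that this surjectivity persists for general $n$, i.e.\ that the image is exactly $\prod_d \{\pm 1\}$ rather than a proper subgroup. By exactness this is equivalent to the identity $|A(C_n)^\times| = 2^c$, where $c$ denotes the number of divisors of $n$ lying in $\{1,2\}$ (so $c = 1$ when $n$ is odd and $c = 2$ when $n$ is even). I plan to apply the Cauchy-Frobenius-Burnside congruences
\[
\sum_{d \mid e \mid n} \phi(e/d)\,\epsilon_e \equiv 0 \pmod{n/d}
\]
to sign tuples $(\epsilon_e) \in \{\pm 1\}^r$ and argue by induction from the top down: any modulus $n/d > 2$ forces $\epsilon_d$ to coincide with a single reference sign within its $v_2$-stratum, while moduli $n/d \in \{1,2\}$ impose no constraint on signs. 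This produces exactly two orbits of permissible sign tuples when $n$ is odd and four when $n$ is even, confirming $|A(C_n)^\times| = 2^c$.

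Quotienting $\Gamma(Y,Q)$ factor by factor by $\prod_d \{\pm 1\}$ then yields $\pic(A(C_n)) \cong \prod_{d \mid n} (\bZ/d)^\times/\{\pm 1\}$, which collapses to the stated formula once the trivial $d \in \{1,2\}$ factors are absorbed.
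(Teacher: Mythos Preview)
Your approach is the same as the paper's: both specialise the four-term exact sequence, identify $\Gamma(Y,Q)\cong\prod_{d\mid n}(\bZ/d)^\times$, determine $A(C_n)^\times$, and quotient. The only difference is how $A(C_n)^\times$ is obtained. The paper simply exhibits the units explicitly---$-1$ always, together with $\tau=[C_n/C_{n/2}]-[C_n/C_n]$ when $n$ is even---reads off their ghost images, and observes directly that the cokernel $\{\pm 1\}^{r-1}$ (resp.\ $\{\pm 1\}^{r-2}$) acts on $\Gamma(Y,Q)$ with one $\{\pm 1\}$ in each nontrivial factor $(\bZ/d)^\times$.

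Your CFB-congruence derivation reaches the correct count ($2$ units for $n$ odd, $4$ for $n$ even), but the ``$v_2$-stratum'' description of which sign-tuples satisfy the congruences is not correct when $4\mid n$. For $n=8$ the divisors $1,2,4,8$ lie in four distinct $v_2$-strata, so your criterion would impose no relations and predict $2^4$ admissible tuples; in fact the congruences force $\epsilon_1=\epsilon_2=\epsilon_4$ with $\epsilon_8$ free, giving exactly $4$. In general, for even $n$ the ghost image of $\tau$ is $+1$ on $\{e:e\mid n/2\}$ and $-1$ on $\{e:v_2(e)=v_2(n)\}$, so the two blocks are determined by whether $v_2(e)$ attains its maximum, not by the individual value $v_2(e)$. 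This is easily repaired, but it is precisely the step the paper sidesteps by writing the units down directly rather than rediscovering them through the congruences.
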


\begin{proof}
We have one subgroup $C_{n/d} \leq C_n$ for each $d \mid n$, and we get
\[
 \Gamma(Y, Q) \cong \prod_{d \mid n} \big( \bZ/(C_n/C_{n/d}) \big)^\times \cong \prod_{d \mid n} \big( \bZ/d \big)^\times.
\]
We can omit the $d=1$ and $d=2$ factors, as these are trivial.

If $n$ is odd we have $A(C_n)^\times \cong \{\pm 1\}$ with the $-1$ represented by $-[C_n/C_n]$. The image of this group of units under the Burnside ghost map is the diagonal $\{\pm 1\}$, so if $n$ has $r$ divisors the map $A(C_n)^\times \to (\bZ^r)^\times$ is the diagonal map $\{\pm 1\} \to \{\pm 1\}^r$. The cokernel of this map is isomorphic to $\{\pm 1\}^{r-1}$, and $\pic(A(C_n)) \cong \Gamma(Y, Q)/\{\pm 1\}^{r-1}$ with each $\{\pm 1\}$ acting on a single $(\bZ/d)^\times$ for $d \neq 1$, This gives the result for $n$ odd.

If $n$ is even we have $A(C_n)^\times \cong \{\pm 1\}^2$ with generators represented by $-1 = -[C_n/C_n]$ and $\tau = [C_n/C_{n/2}] - [C_n/C_n]$. The image of $-1$ under the Burnside ghost map is $C_d \mapsto -1$ for all $d \mid n$, and the image of $\tau$ is $C_d \mapsto \begin{cases} 1 \quad & \textnormal{if $d \mid n/2$} \\ -1 \quad & \textnormal{if $d \nmid n/2$} \end{cases}$

The cokernel of this map is isomorphic to $\{\pm 1\}^{r-2}$ with each $\{\pm 1\}$ acting on a single $(\bZ/d)^\times$ for $d \neq 1, 2$. This gives the result for $n$ even.
\end{proof}

This completes the description of the factor $\prod\limits_{\substack{d \mid n \\ d \neq 1, 2}} (\bZ/d)^\times/\{\pm 1\}$ in the expression for $\pic(\Csp)$ in Theorem \ref{t:A}.

One can, with some difficulty, give an explicit description of an invertible $A(C_n)$-module representing each element of $\pic(A(C_n))$. But we will wait until Section \ref{s:picMackey} to do so, as the perspective of Mackey functors makes the description more intuitive. In Section \ref{s:pi0calc} we find an explicit $C_n$-spectrum representing each element of $\pic(A(C_n))$.

\section{The image of $d : \pic(\Gsp) \to C(G)$} \label{s:imd}
Recall from \cite[Definition III.5.1]{tD87} that $f \in C(G)$ is a \emph{Borel-Smith function} if it satisfies a certain list of congruences.

For any invertible $X \in \Gsp$, $d(X)$ is a Borel-Smith function, and by \cite[Theorem III.5.4]{tD87} the map $d \circ jO : RO(G) \to C(G)$ surjects onto the subgroup of Borel-Smith functions when $G$ is nilpotent. In particular this holds when $G$ is abelian.

Because the image of $d$ is a subgroup of a free abelian group, it is itself free and after choosing a basis we get a short exact sequence
\[
 0 \to \pic(A(G)) \to \pic(\Gsp) \to \bZ^r \to 0
\]
for some $r$. Here $r$ could be smaller than the number of conjugacy classes of subgroups of $G$. When $G$ is nilpotent $r$ is equal to the number of conjugacy classes of cyclic subgroups of $G$.

This can be completed to the following diagram with exact rows:
\[ \xymatrix{
 0 \ar[r] & RO_0(G) \ar[r] \ar[d] & RO(G) \ar[r] \ar[d] & \overline{RO}(G) \ar[r] \ar[d]^\cong & 0 \\
 0 \ar[r] & \pic(A(G)) \ar[r] & \pic(\Gsp) \ar[r] & \bZ^r \ar[r] & 0
} \]

If we choose a splitting we get
\[
 \pic(\Gsp) \cong \pic(A(G)) \times \bZ^r,
\]
and every element in the second factor (or more precisely every element $(1,b)$ where $1 \in \pic(A(G))$ is the identity element) is represented by a virtual representation sphere.

We once again specialise to $G=C_n$. Recall that the irreducible real representations of $C_n$ are the trivial representation $1$, the sign representation $\sigma$ if $n$ is even, and the $2$-dimensional representations $\lambda(i)$ for $i=1,\ldots,\lfloor \frac{n-1}{2} \rfloor$ with $z \in C_n \subset \bC$ acting by multiplication by $z^i$.

First, $C(C_n)$ is a free abelian group of rank equal to the number of divisors of $n$. The subgroup of Borel-Smith functions is the finite index subgroup generated by the functions $\{f_d\}_{d \mid n}$ defined as follows: We have $f_1(H) = 1$ for all $h$, and if $2 \mid n$ we have $f_2(H) = 1$ for $H \leq C_{n/2}$ and zero otherwise. For $d \neq 1, 2$ we have $f_d(H) =2$ for $H \leq C_{n/d}$ and zero otherwise. We will write $\prod\limits_{d \mid n} \bZ$ for the subgroup of Borel-Smith functions, and observe that these are exactly the dimension functions of the irreducible $C_n$-representations. (But note that multiple irreducible representations can have the same dimension function.) We then have a short exact sequence
\[
 0 \to \pic(A(C_n)) \to \pic(Sp^{C_n}) \to \prod_{d \mid n} \bZ \to 0.
\]

Now we go ahead and choose the following splitting $\prod\limits_{d \mid n} \bZ \to \pic(Sp^{C_n})$: For $b = (b_d) \in \prod\limits_{d \mid n} \bZ$ we let
\[
 Y_b = S^{b_1} \sma S^{b_2 \sigma} \sma \bigwedge_{\substack{d \mid n \\ d \neq 1, 2}} S^{b_d \lambda(n/d)}.
\]
Here we omit the smash factor $S^{b_2 \sigma}$ if $n$ is odd.

This completes the description of the factor $\prod\limits_{d \mid n} \bZ$ in the expression for $\pic(\Csp)$ in Theorem \ref{t:A}, and together with Section \ref{s:picBurnside} this completes the proof of Theorem \ref{t:A}.

\section{The Picard group of Mackey functors} \label{s:picMackey}
There is another natural notion of a Picard group in $G$-equivariant stable homotopy theory. Recall, e.g.\ from \cite{We00}, that the category of $G$-Mackey functors is a symmetric monoidal category under the box product $\Box$ with unit the Burnside Mackey functor $\un{\cA}_G$. We can then consider the group $\pic(\Mack_G)$ of isomorphism classes of invertible $G$-Mackey functors under $\Box$.

\begin{thm} \label{t:picBurnsidepicMackey}
For any finite group $G$ we have a natural isomorphism $\pic(A(G)) \cong \pic(\Mack_G)$.
\end{thm}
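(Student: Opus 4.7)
The plan is to construct mutually inverse homomorphisms via an ``extension of scalars'' adjunction. Define $\Phi \colon A(G)\textnormal{-Mod} \to \Mack_G$ by setting $\Phi(M)(G/H) = M \otimes_{A(G)} A(H)$, where $A(H) = \un{\cA}_G(G/H)$ is regarded as an $A(G)$-module via restriction. The restriction maps of $\Phi(M)$ are $\textnormal{id}_M \otimes R^H_K$ and the transfer maps are $\textnormal{id}_M \otimes \tr^H_K$, well-defined on the tensor product by Frobenius reciprocity in the Green functor $\un{\cA}_G$. This $\Phi$ is left adjoint to evaluation at $G/G$, $\Psi \colon \Mack_G \to A(G)\textnormal{-Mod}$, with counit $\eta_{\un{N}}$ given at level $G/H$ by $n \otimes a \mapsto a \cdot R^G_H(n)$, using the canonical $\un{\cA}_G$-module structure on $\un{N}$.

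Next I would verify that $\Phi$ is strong symmetric monoidal, taking $\otimes_{A(G)}$ to $\Box$ and $A(G)$ to $\un{\cA}_G$, while $\Psi$ is lax symmetric monoidal via the canonical map $\un{N}(G/G) \otimes_{A(G)} \un{M}(G/G) \to (\un{N} \Box \un{M})(G/G)$ (which is an isomorphism whenever either factor is $\un{\cA}_G$). Consequently both functors restrict to group homomorphisms between the relevant Picard groups, and $\Psi \circ \Phi = \textnormal{id}$ is immediate from $M \otimes_{A(G)} A(G) = M$.

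The main obstacle is showing $\Phi \circ \Psi = \textnormal{id}$ on $\pic(\Mack_G)$, equivalently that $\eta_{\un{N}}$ is an isomorphism for every invertible $\un{N}$. I would settle this by a monoidality argument: since $\Phi$ is strong monoidal and $\Psi$ is lax monoidal, the counit $\eta$ is a monoidal natural transformation, and clearly $\eta_{\un{\cA}_G} = \textnormal{id}$. For invertible $\un{N}$ with inverse $\un{N}^{-1}$, monoidality yields
\[
 \eta_{\un{N}} \Box \eta_{\un{N}^{-1}} \;=\; \eta_{\un{N} \Box \un{N}^{-1}} \;=\; \eta_{\un{\cA}_G} \;=\; \textnormal{id}_{\un{\cA}_G},
\]
so $\eta_{\un{N}}$ has a $\Box$-inverse and is therefore an isomorphism. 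The step that requires the most care is verifying that $\Phi$ is genuinely \emph{strong} (not merely lax) symmetric monoidal on invertibles, i.e.\ that the natural map $\Phi(M) \Box \Phi(N) \to \Phi(M \otimes_{A(G)} N)$ is an isomorphism for $M, N$ invertible. This can be reduced to the trivial case $M = N = A(G)$ by a resolution argument, since Section \ref{s:picBurnside} shows invertible $A(G)$-modules are finitely generated projective, hence $\otimes_{A(G)}$-flat, and $\Phi$ preserves finite direct sums and cokernels.
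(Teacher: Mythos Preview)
Your setup is essentially identical to the paper's: the same adjunction $\Phi \dashv \Psi$ (the paper calls them $F$ and $U$), the same observation that $\Psi\Phi = \textnormal{id}$, and the same reduction to showing the counit is an isomorphism on invertibles. The paper's description of the counit is concrete---it identifies $\Phi\Psi(\un{N}) \to \un{N}$ with the inclusion of the sub-Mackey functor generated under restrictions and transfers by $\un{N}(G/G)$---and then simply asserts that for invertible $\un{N}$ this sub-Mackey functor is all of $\un{N}$.

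Your monoidality argument for this last step, however, has a genuine gap. Monoidality of the counit gives only
\[
 \eta_{\un{N}} \Box \eta_{\un{N}^{-1}} \;=\; \eta_{\un{N}\Box\un{N}^{-1}} \circ \mu_{\un{N},\un{N}^{-1}},
\]
where $\mu$ is the \emph{lax} comparison map for $\Phi\Psi$. Writing this as ``$\eta_{\un{N}} \Box \eta_{\un{N}^{-1}} = \textnormal{id}_{\un{\cA}_G}$'' silently assumes $\mu_{\un{N},\un{N}^{-1}}$ is an isomorphism, i.e.\ that $\Psi$ is \emph{strong} monoidal on the pair $(\un{N},\un{N}^{-1})$. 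But that is precisely the statement that $\Psi(\un{N}) \otimes_{A(G)} \Psi(\un{N}^{-1}) \cong A(G)$, which is what you are trying to establish. Your parenthetical that the lax map for $\Psi$ is an isomorphism ``whenever either factor is $\un{\cA}_G$'' does not cover this case. Even if one granted that $\eta_{\un{N}} \Box \eta_{\un{N}^{-1}}$ were an isomorphism, deducing that each factor is an isomorphism still requires knowing the sources $\Phi\Psi(\un{N})$ and $\Phi\Psi(\un{N}^{-1})$ are invertible, which again presupposes that $\Psi$ preserves invertibles.

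To close the circle you need an independent reason why an invertible Mackey functor is generated at level $G/G$ (equivalently, why the counit is surjective there), or a direct argument that $\un{N}(G/G)$ is an invertible $A(G)$-module. The paper's concrete identification of $\Phi\Psi(\un{N})$ as a sub-Mackey functor is the natural starting point for such an argument; your abstract monoidal bookkeeping does not, by itself, supply it.
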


\begin{proof}
We have an adjunction
\[
 F : \Mod_{A(G)} \rightleftarrows \Mack_G : U.
\]
Here $F$ is defined by $F(M)(G/H) = A(H) \otimes_{A(G)} M$, with restriction and transfer maps induced by the restriction and transfer maps between Burnside rings. The functor $U$ is defined by $U(\un{N}) = \un{N}(G/G)$, with $A(G)$-module structure given by $[G/H] \cdot x = \tr_H^G(R^G_H(x))$.

The composite $U \circ F$ is naturally isomorphic to the identity, while $F \circ U(\un{N})$ picks out the sub-Mackey functor of $\un{N}$ with $F \circ U(\un{N})(G/H)$ generated by $\tr_K^H(R^G_K(x))$ for $x \in \un{N}(G/G)$ and $K \leq H$.

Both functors map invertible objects to invertible objects, and $F \circ U$ is naturally isomorphic to the identity when restricted to invertible Mackey functors, so $F$ and $U$ induce inverse isomorphisms on Picard groups.
\end{proof}

Next we describe a certain family of $C_n$-Mackey functors, starting with the Burnside Mackey functor $\un{\cA} = \un{\cA}_{C_n}$. As abelian groups we have $\un{\cA}(C_n/C_m) = A(C_m) = \bZ\{x^m_d\}_{d \mid m}$, with $x^m_d$ representing the finite $C_m$-set $C_m/C_{m/d}$ or cardinality $d$. The transfer maps are given by
\[
 \tr^k : A(C_m) \to A(C_{km}) \qquad \tr^k(x^m_q) = x^{km}_{kq},
\]
and the restriction maps are determined by
\[
 R^k(x^n_1) = x^{n/k}_1
\]
for all $k \mid n$. The action of $C_n/C_m$ on $\un{\cA}(C_n/C_m)$ is trivial.

\begin{defn}
Suppose $a = (a_d) \in \prod\limits_{\substack{d \mid n \\ d \neq 1}} \bZ$ satisfies $a_e \mid a_d$ for $e \mid d$, and $a_d \neq 0$ for each $d$. Let $\lsup{a} \un{\cA}$ denote the Mackey functor defined in the same way as $\un{\cA}$, except with restriction maps determined by
\[
 R^k(x^n_1) = a_k x^{n/k}_1.
\]
\end{defn}

\begin{remark}
The remaining restriction maps are determined as follows: For a generator $x^{n/k}_1$ we use that $R^{k\ell} = R^\ell \circ R^k$ to conclude that $R^\ell(x^{n/k}_1) = \frac{a_{k\ell}}{a_k} x^{n/k\ell}_1$. Any other generator $x^m_d$ for $d > 1$ is in the image of some transfer map, and in this case $R^k(x^m_d)$ is determined by how restriction and transfer maps commute.
\end{remark}

In general the box product of Mackey functors is difficult to compute, but for this particular family of Mackey functors we have the following:

\begin{lemma}
Given Mackey functors $\lsup{a} \un{\cA}$ and $\lsup{b} \un{\cA}$ as defined above we have
\[
 \lsup{a} \un{\cA} \Box \lsup{b} \un{\cA} \cong \lsup{ab} \un{\cA},
\]
where $(ab)_d = a_d b_d$.
\end{lemma}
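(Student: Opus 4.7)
The plan is to exhibit an explicit isomorphism $\Phi\col \lsup{a}\un{\cA} \Box \lsup{b}\un{\cA} \to \lsup{ab}\un{\cA}$ using the universal property of the box product: giving such a $\Phi$ is equivalent to giving, at each divisor $m \mid n$, a bilinear pairing $\phi_m\col \lsup{a}\un{\cA}(C_n/C_m) \otimes_\bZ \lsup{b}\un{\cA}(C_n/C_m) \to \lsup{ab}\un{\cA}(C_n/C_m)$ satisfying Frobenius reciprocity against transfers and commuting with restrictions. The natural normalisation is to demand $\phi_m(x^m_1, x^m_1) = x^m_1$ at every level.

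Because $A(C_m)$ is additively spanned by the $x^m_e = \tr^e(x^{m/e}_1)$, Frobenius reciprocity forces a unique extension. Running the iteration (writing $m = n/k$ and $L = \textnormal{lcm}(e,f)$) yields the closed form
\[
 \phi_m(x^m_e, x^m_f) = \gcd(e,f) \cdot \frac{a_{kL}\, b_{kL}}{a_{ke}\, b_{kf}} \cdot x^m_L,
\]
with the divisibility assumption $a_d \mid a_{d'}$ for $d \mid d'$ ensuring that the coefficient is an integer. I then have to check (i) that applying Frobenius via the first versus the second tensor factor yields the same formula, and (ii) that $\phi$ commutes with the restriction maps. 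Both reduce to gcd/lcm bookkeeping which, prime-by-prime, collapses to the identity $\min(a,b) + \min(a,c) + \min(b - \min(a,b),\, c - \min(a,c)) = \min(b,c) + \min(a, \max(b,c))$ together with the multiplicativity $(ab)_d = a_d b_d$.

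Granted these compatibilities, $\Phi$ is a well-defined Mackey functor map, and to see it is an isomorphism at each level $C_n/C_m$ I iterate Frobenius to rewrite every element of $(\lsup{a}\un{\cA} \Box \lsup{b}\un{\cA})(C_n/C_m)$ as a $\bZ$-linear combination of the distinguished generators $t_\ell = [C_\ell;\, x^\ell_1, x^\ell_1]$ sitting in the $C_\ell$-summand of the box product, for $\ell \mid m$. Since $\Phi(t_\ell) = x^m_{m/\ell}$ and the $\{x^m_{m/\ell}\}_{\ell \mid m}$ form a $\bZ$-basis of $A(C_m) = \lsup{ab}\un{\cA}(C_n/C_m)$, the map $\Phi$ is surjective; the matching of $\bZ$-ranks (both sides have rank equal to the number of divisors of $m$) then forces $\Phi$ to be an isomorphism.

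The hard part is the bookkeeping in step (ii): three different indexing conventions ($a_d$ indexed by divisors of $n$, $x^m_d$ indexed by divisors of $m$, and restrictions passing through intermediate levels $C_n/C_{m/j}$) interact with the double coset formula, which contributes both integer multiplicities and the twist ratios $a_{kL}/a_{ke}$ simultaneously. There is no conceptual obstruction --- every identity ultimately traces back to $\gcd(d,e) \cdot \textnormal{lcm}(d,e) = de$ and the divisibility of the $a_d$'s --- but patience is required to organise the verification so that the twist factors and the gcd/lcm multiplicities line up without getting lost in indices.
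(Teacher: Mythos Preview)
Your argument is correct, but it takes a different route from the paper's. The paper works directly with the coend presentation of the box product: at level $C_n/C_m$ one has $\lsup{a}\un{\cA}(C_n/C_m) \otimes \lsup{b}\un{\cA}(C_n/C_m) \oplus \im(\tr)$ modulo Frobenius reciprocity, and the single observation that $x^m_q \otimes y^m_r \sim \tr^q(x^{m/q}_1 \otimes R^q(y^m_r))$ for $q>1$ (and symmetrically for $r>1$) shows that everything outside $\bZ\{x^m_1 \otimes y^m_1\}$ already lies in $\im(\tr)$. Since $R^k(x^m_1 \otimes y^m_1) = R^k(x^m_1) \otimes R^k(y^m_1)$, the restriction on the new generator picks up exactly the product $a_k b_k$, and the proof is finished in a few lines. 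By contrast, you go through the universal property (Dress pairings), write down the closed formula $\phi_m(x^m_e, x^m_f) = \gcd(e,f)\,\tfrac{a_{kL} b_{kL}}{a_{ke} b_{kf}}\, x^m_L$, verify the pairing axioms, and then argue surjectivity plus a rank count. Both rest on the same underlying reduction via Frobenius reciprocity to the ``diagonal'' generators $x^m_1 \otimes y^m_1$; the paper exploits it once to identify the box product outright, while you exploit it twice (first to derive $\phi$, then to span by the $t_\ell$). What your approach buys is an explicit formula for the multiplication on all basis elements, at the price of the restriction-compatibility bookkeeping you flag; what the paper's approach buys is brevity, since identifying the quotient directly sidesteps any need to verify pairing axioms.
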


\begin{proof}
We have
\[
 \lsup{a} \un{\cA} \Box \lsup{b} \un{\cA}(C_n/C_m) = \lsup{a} \un{\cA}(C_n/C_m) \otimes \lsup{b} \un{\cA}(C_n/C_m) \oplus Im(\tr)/\sim,
\]
where the equivalence relation is given by Frobenius reciprocity. If we denote the basis for $\lsup{a} \un{\cA}(C_n/C_m)$ by $\{x^m_q\}$ and the basis for $\lsup{b} \un{\cA}(C_n/C_m)$ by $\{y^m_q\}$, Frobenius reciprocity tells us that
\begin{eqnarray*}
 x^m_q \otimes y^m_r \sim \tr^q(x^{m/q}_1 \otimes R^q(y^m_r)) \qquad \textnormal{for $q > 1$} \\
 x^m_q \otimes y^m_r \sim \tr^r(R^r(x^m_q) \otimes y^{m/r}_1)) \qquad \textnormal{for $r > 1$} \\
\end{eqnarray*}
Hence $\lsup{a} \un{\cA} \Box \lsup{b} \un{\cA}(C_n/C_m) \cong \bZ\{x^m_1 \otimes y^m_1\} \oplus Im(\tr)$, and because $R^k(x^m_1 \otimes y^m_1) = R^k(x^m_1) \otimes R^k(y^m_1)$ the result follows.
\end{proof}

We would like to understand when $\lsup{a} \un{\cA}$ is invertible and when $\lsup{a} \un{\cA} \cong \lsup{b} \un{\cA}$. To do this we can study what happens when we do a change of basis, for example by replacing $x^n_1$ by $x^n_1 + \sum\limits_{d > 1} c_d x^n_d$ for certain $c_d \in \bZ$.

But before we do that, we restrict our attention to those $a$ for which $a_r a_s \mid a_{rs}$ for all relatively prime $r$ and $s$.

\begin{defn}
Let $a = (a_d) \in \prod\limits_{\substack{d \mid n \\ d \neq 1}} \bZ$ with each $a_d \neq 0$. Let $\un{\cA}^a$ denote the Mackey functor defined in the same way as $\un{\cA}$, except with restriction maps determined by
\[
 R^k(x^n_1) = \big( \prod_{d \mid k} a_d \big) x^{n/k}_1.
\]
\end{defn}

Any Mackey functor $\un{\cA}^a$ is of the form $\lsup{b} \un{\cA}$ for $b_k = \prod\limits_{d \mid k} a_d$, but the reverse is not true. Note that we still have
\[
 \un{\cA}^a \Box \un{\cA}^b \cong \un{\cA}^{ab}.
\]

\begin{prop}
Given tuples $a=(a_d)$ and $b=(b_d)$ we have the following:
\begin{enumerate}
 \item Suppose $a_d \equiv b_d \mod d$ for each $d$. Then $\un{\cA}^a \cong \un{\cA}^b$.
 \item Suppose $a_d = \pm b_d$ for each $d$. Then $\un{\cA}^a \cong \un{\cA}^b$.
 \item Suppose $\un{\cA}^a \cong \un{\cA}^b$. Then $a_d \equiv \pm b_d \mod d$ for each $d$.
\end{enumerate}
\end{prop}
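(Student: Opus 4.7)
Any Mackey functor map $\phi : \un{\cA}^a \to \un{\cA}^b$ is determined by the vectors $\phi_m(x^m_1) = \sum_{d \mid m} c_{m,d}\, x^m_d$ for $m \mid n$, since transfer compatibility forces $\phi_m(x^m_d) = \tr^d(\phi_{m/d}(x^{m/d}_1))$. In the basis $\{x^m_d\}_{d \mid m}$, the matrix of $\phi_m$ is upper-triangular in divisibility order with diagonal $(c_{m/d,1})_{d \mid m}$, so $\phi_m$ is invertible iff each $c_{m,1} \in \{\pm 1\}$. Because $x^m_d = \tr^d(x^{m/d}_1)$, it is enough to impose restriction compatibility on the generators $x^m_1$; the relations on the remaining $x^m_d$ then follow from this together with the double coset formula.

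For (2), let $\eta_d := a_d/b_d \in \{\pm 1\}$ and set $\phi_m(x^m_1) := \epsilon_m\, x^m_1$ where $\epsilon_m := \prod_{d \mid (n/m),\, d > 1} \eta_d$. Transfer compatibility extends this to $\phi_m(x^m_d) = \epsilon_{m/d}\, x^m_d$, a diagonal invertible map. Restriction compatibility reduces to the telescoping identity $\epsilon_{m/k}/\epsilon_m = \prod_{d \mid (n/m)k,\, d > 1,\, d \nmid (n/m)} \eta_d$, which is immediate from the definition of $\epsilon_m$.

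For (1), take $c_{m,1} = 1$ for all $m$ and construct $c_{m,d}$ for $d > 1$ by induction on $d$ at each level $m$. The $x^{m/k}_1$ component of restriction compatibility at $(m,k)$ gives a linear relation in which $c_{m,k}$ appears with coefficient $k$; after substituting the equations at smaller divisors $d' \mid k$, the right-hand side collapses to a sum of terms each containing a factor $(a_d - b_d)$ for some $d \mid k$ times other $a$'s or $b$'s. The hypothesis $a_d \equiv b_d \pmod d$ makes every such term divisible by $k$, so $c_{m,k} \in \bZ$ exists. For example, at $k = p^j$ one finds $c_{m, p^j} \cdot p^j = a_p \cdots a_{p^{j-1}}(a_{p^j} - b_{p^j})$, while at $k = pq$ one finds $a_p a_q(a_{pq} - b_{pq}) + (a_p - b_p)(a_q - b_q)\,b_{pq}$.

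For (3), write $\phi_n(x^n_1) = \sum_{d \mid n} c_d\, x^n_d$ with each $c_{m,1} \in \{\pm 1\}$. The $x^{n/k}_1$ component of restriction compatibility yields the exact relation
\[
\sum_{d \mid k} c_d \cdot d \cdot \prod_{e \mid k/d,\, e > 1} b_e \;=\; \Big( \prod_{e \mid k,\, e > 1} a_e \Big) c_{n/k,\,1}.
\]
I prove $a_d \equiv \pm b_d \pmod d$ by induction on $d$ in divisibility order. In the inductive step, use the equations at $k = e$ for $e \mid d$, $e < d$, (each solved for $c_e \cdot e$) to eliminate those terms from the equation at $k = d$. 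After telescoping cancellation, the identity reduces modulo each prime power $p^{v_p(d)}$ dividing $d$ to $a_d \equiv s\, b_d \pmod{p^{v_p(d)}}$ for a sign $s \in \{\pm 1\}$ given by a specific product of the lower $c_{\cdot,1}$ values. Since the expression for $s$ does not depend on which prime $p$ one localizes at, CRT combines these into $a_d \equiv s\, b_d \pmod d$. The main obstacle is the bookkeeping here: verifying that the telescoping cancellation produces the same $s$ for every prime $p \mid d$ requires carefully tracking which $c_{\cdot,1}$'s appear in the reduction.
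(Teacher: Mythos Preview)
Your approach differs substantially from the paper's. The paper proves (1) by exhibiting, for each fixed $k$, a single change of basis $y^n_1 = x^n_1 + \big(\prod_{d \mid k,\, d \neq k} a_d\big) x^n_k$ (with corresponding adjustments at lower levels) which replaces $a_k$ by $a_k+k$ and leaves the other $a_d$ unchanged; iterating such moves yields (1) without solving a coupled system. For (3) the paper then argues that any isomorphism, viewed as a change of basis preserving the description of the transfer maps and the property that $R^k(y^n_1)$ is a multiple of $y^{n/k}_1$, must be a composite of the moves from (1) and (2). This ``one parameter at a time'' strategy completely avoids the simultaneous elimination and sign--tracking you attempt.

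Your direct computational route is plausible but has real gaps. In (1) you use only the $x^{m/k}_1$-component of restriction compatibility at each $(m,k)$ to define $c_{m,k}$; you never check the $x^{m/k}_e$-components for $e>1$. These \emph{do} follow---the map sending an element of $\un{\cA}^b(C_n/C_{m/k})$ to the tuple of $x^{m/ke}_1$-coefficients of its restrictions $R^e$ is injective after tensoring with $\bQ$, and both sides are integral---but this is a genuine step you have omitted. In (3) your displayed identity is wrong: the restriction factor in $\un{\cA}^b$ from level $n/d$ to level $n/k$ is $\prod_{e \mid k,\, e \nmid d} b_e$, not $\prod_{e \mid k/d,\, e>1} b_e$ (take $k=p^2$, $d=p$: the correct factor is $b_{p^2}$, your formula gives $b_p$; note that in your own $k=pq$ example in (1) you used the correct factor $b_q b_{pq}$). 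With the corrected formula the elimination strategy can in principle be carried out, but as you yourself acknowledge, the crux of (3)---that the sign $s$ obtained after reduction is the same for every prime $p \mid d$---is precisely what you have not verified, so (3) remains unproved in your write-up.
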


It follows that $\un{\cA}^a$ is invertible if and only if each $a_d$ is invertible in $\bZ/d$, and in that case the isomorphism class of $\un{\cA}^a$ is determined by the equivalence class of $a_d$ in $(\bZ/d)^\times/\{\pm 1\}$ for each $d$. By Theorem \ref{t:A} and Theorem \ref{t:picBurnsidepicMackey} we know how many isomorphism classes of invertible $C_n$-Mackey functors we are supposed to have, and because the above result describes the same number of invertible $C_n$-Mackey functors it follows that these are all of them.

\begin{proof}
We perform an appropriate change of basis. If we start with $\un{\cA}^a$ and we want to add $k$ to $a_k$, we replace $x^n_1$ by
\[
 y^n_1 = x^n_1 + \big( \prod\limits_{d \mid k, d \neq k} a_d \big) x^n_k
\]
Then
\[
 R^k(y^n_1) = \prod\limits_{d \mid k} a_k x^{n/k}_1 + d \big( \prod\limits_{d \mid k, d \neq k} a_d \big) x^{n/k}_1 = (a_k+k) \big( \prod\limits_{d \mid k, d \neq k} a_d \big) x^{n/k}_1.
\]
Moreover, we change the basis for $\un{\cA}^a(C_n/C_m)$ for each $m$ so that $R^{n/m}(y^n_1)$ is a multiple of $y^m_1$ and so that the transfer maps are given by the same formula as before. That proves the first part. For the second part, we do the change of basis $y^m_1 = \pm x^m_1$ for appropriate $m$.

For the last part, note that an isomorphism can be interpreted as a change of basis, and that any change of basis from $\{x^m_d\}$ to $\{y^m_d\}$ with the two properties that the description of the transfer maps remains intact and that $R^k(y^n_1)$ is a multiple of $y^{n/k}_1$ must be a combination of the changes of basis described above.
\end{proof}

\begin{remark}
For any $n$ there is one change of basis that does not change $a=(a_d)$, namely replacing each $x^m_q$ by $-x^m_q$. This corresponds to the unit $-1=[C_n/C_n]$ in $A(C_n)$.

If $n$ is even there is one other change of basis that does not change $a=(a_d)$, given by $y^n_1 = -x^n_1 + x^n_2$. This corresponds to the unit $\tau = [C_n/C_{n/2}] - [C_n/C_n]$ in $A(C_n)$.
\end{remark}

Any element of $\pic(\Mack_{C_n})$ is represented by $\un{\cA}^a$ with $1 \leq a_d < \frac{d}{2}$ in a unique way. Given $a=(a_d) \in \prod\limits_{\substack{d \mid n \\ d \neq 1, 2}} \bZ$ with each $a_d$ relatively prime to $d$, let $\overline{a}$ denote the representative of the equivalence class of $a$ in $\prod\limits_{\substack{d \mid n \\ d \neq 1, 2}} (\bZ/d)^\times/\{\pm 1\}$ with $1 \leq a_d < \frac{d}{2}$.

Given $a$, we have multiple isomorphisms $\un{\cA}^a \to \un{\cA}^{\overline{a}}$. The set of such isomorphisms is in bijection with $Aut(\un{\cA})$. As observed in the proof of Theorem \ref{t:picBurnside}, for $n$ odd we have $Aut(\un{\cA}) \cong \bZ/2$ and for $n$ even we have $Aut(\un{\cA}) \cong \bZ/2 \times \bZ/2$.

We can pick out a preferred isomorphism as follows. Let us denote the above basis for $\un{\cA}^a$ by $\{x^m_q\}$ and the basis for $\un{\cA}^{\overline{a}}$ by $\{y^m_q\}$. If $n$ is odd we insist that $x^1_1 \mapsto y^1_1$ (as opposed to $-y^1_1$) and this uniquely determines the isomorphism. If $n$ is even we also insist that the isomorphism sends $x^n_1$ to $y^n_1 + \sum\limits_{d \geq 2} c_d y^n_d$ (as opposed to $-y^n_1 + \sum\limits_{d \geq 2} c_d y^n_d$).

We will get back to this in Section \ref{s:coherence}.

\section{A $\un{\pi}_0$ calculation} \label{s:pi0calc}
In this section we find explicit representatives for the $\pic(A(C_n))$ factor of $\pic(\Csp)$ from Theorem \ref{t:A}.

\begin{thm}
Let $X = S^{\lambda(n/d) - \lambda(a n/d)}$ for some $a$ relatively prime to $d$. Then $\un{\pi}_n X = 0$ for any integer $n \neq 0$ and $\un{\pi}_0 X \cong \un{\cA}^{\hat{a}}$ for $\hat{a}=(a_e)$ with $a_e = \begin{cases} a \quad & e=d \\ 1 \quad & \textnormal{otherwise} \end{cases}$
\end{thm}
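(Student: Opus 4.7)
The plan proceeds in three stages: first, verify $[X] \in \pic(A(C_n))$ by a dimension-function calculation; second, deduce that higher homotopy Mackey functors vanish; finally, identify $\un{\pi}_0 X$ with $\un{\cA}^{\hat a}$. For the first stage, the representation $\lambda(j)$ of $C_n$ has kernel $C_{\gcd(j,n)}$, and since $\gcd(a,d) = 1$ one has $\gcd(an/d, n) = n/d = \gcd(n/d, n)$, so both $\lambda(n/d)$ and $\lambda(an/d)$ share the kernel $C_{n/d}$. Hence the fixed-point dimensions agree for every subgroup $C_{n/k} \leq C_n$: each equals $2$ when $d \mid k$ and $0$ otherwise, so $d([X]) = 0$ in $C(C_n)$ and $[X] \in \pic(A(C_n))$ by the exact sequence in Section~\ref{s:imd}. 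This also yields $\Phi^H X \simeq S^0$ for every $H \leq C_n$.

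For the second stage, I would combine the above with Theorem~\ref{t:picBurnsidepicMackey} (which identifies $\pic(A(C_n))$ with $\pic(\Mack_{C_n})$) and the realisation of invertible Mackey functors in $\Csp$ by their Eilenberg--MacLane spectra. Together with an isotropy-separation induction over the subgroup lattice of $C_n$, using the collapse of higher-degree contributions due to $\Phi^H X \simeq S^0$, this forces $X \simeq H\un{\pi}_0 X$ in the relevant sense, and hence $\un{\pi}_k X = 0$ for $k \neq 0$; moreover $\un{\pi}_0 X$ is an invertible Mackey functor.

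The third stage identifies $\un{\pi}_0 X$ among the Mackey functors $\un{\cA}^b$ classified in Section~\ref{s:picMackey}. I would exploit the $C_n$-equivariant map $\phi : S^{\lambda(n/d)} \to S^{\lambda(an/d)}$, $w \mapsto w^a$, whose equivariance follows from $a \cdot (n/d) \equiv an/d \pmod n$ and which has underlying degree $a$. Restricted to any $C_{n/k}$ with $d \mid k$, both representations become the trivial $2$-dimensional one, and $\phi$ reduces to the degree-$a$ self-map $w \mapsto w^a$ of $S^2$ on the trivially-acted sphere. This pins down the restriction factor of $\un{\pi}_0 X$ at such subgroups as $\pm a \pmod d$, while at subgroups with $d \nmid k$ the restriction factor is forced to be $\pm 1$ since $a_d$ is the only nontrivial entry of $\hat a$. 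Matching these restriction factors against the classification and applying the $\{\pm 1\}$-equivalence on each coordinate yields $\un{\pi}_0 X \cong \un{\cA}^{\hat a}$. The main obstacle is precisely this matching: the natural generators of $\pi_0^H X$ arising from equivariant maps such as $\phi$ must be reconciled with the basis $\{x^m_q\}$ for $\un{\cA}^{\hat a}(C_n/C_m)$, and the $\{\pm 1\}$-ambiguity in the Mackey-functor classification is what absorbs the resulting sign choices.
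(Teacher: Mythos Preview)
Your first two stages are fine and match the paper's opening move: once $[X]\in\ker d$, the Fausk--Lewis--May theory already gives $\un{\pi}_k X=0$ for $k\neq 0$ and $\un{\pi}_0 X$ invertible, and the paper then invokes the equivariant Hurewicz theorem to reduce everything to computing $\un{H}_0(X;\un{\cA})$. The paper's third stage is quite different from yours: it puts explicit $C_n$-CW structures on $S^{\lambda(n/d)}$ and $S^{-\lambda(an/d)}$, writes down the resulting $3\times 3$ double complex, runs the small spectral sequence to locate the generators of $\un{H}_0$, and then computes the restriction maps on those generators by an explicit zig-zag chase, reading off the factor of $a$ from the number of terms in $\sum_{0\le j<a} x^{d'}_1(j)$.

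Your degree-based approach via $\phi:w\mapsto w^a$ is a reasonable idea, but as written it has two genuine gaps. First, $\phi$ is a map $S^{\lambda(n/d)}\to S^{\lambda(an/d)}$, hence an element of $[S^{\lambda(n/d)},S^{\lambda(an/d)}]^{C_n}=\pi_0^{C_n}(X^{-1})$, not of $\pi_0^{C_n}(X)$; the natural element of $\pi_0^{C_n}(X)=[S^{\lambda(an/d)},S^{\lambda(n/d)}]^{C_n}$ is $z\mapsto z^{a'}$ with $aa'\equiv 1\pmod d$, of underlying degree $a'$, so your conclusion about $\un{\pi}_0 X$ is off by an inversion you have not accounted for. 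Second, and more importantly, you explicitly flag the ``main obstacle'' but do not resolve it: knowing the underlying degree of a single element $\phi\in\pi_0^{C_n}(X^{\pm 1})\cong A(C_n)$ does not by itself determine the restriction factor $R^d(x^n_1)$, because $\phi$ is not a priori the distinguished generator $x^n_1$. One needs to first argue (say via $\Phi^{C_n}(\phi)=1$) that $\phi\equiv x^n_1$ modulo the image of transfer, and then compare ghost coordinates at each subgroup to extract the $b_e$'s; this inductive bookkeeping is precisely the content you have deferred, and without it stage three is only an outline. The paper's cellular computation sidesteps this by producing the generator and its restrictions directly at the chain level.
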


\begin{proof}
Because $X$ is invertible and $[X]$ is in the kernel of $d : \pic(\Csp) \to C(C_n)$, we already know that $\un{\pi}_0 X$ is an invertible $\un{\cA}$-module and that $\un{\pi}_n X = 0$ for $n \neq 0$. By the equivariant Hurewicz theorem \cite[Theorem 2.1]{Le92} it then suffices to show that $\un{H}_0(X;\un{\cA}) \cong \un{\cA}^{\hat{a}}$.

For ease of notation, let $d'=n/d$. We give $S^{\lambda(d')}$ a $C_n$-CW structure with a $0$-cell $C_n/C_n$, a $1$-cell $C_n/C_{d'}$, and a $2$-cell $C_n/C_{d'}$. Similarly, we give $S^{-\lambda(a_d d')}$ a $C_n$-CW structure with a $0$-cell $C_n/C_n$, a $(-1)$-cell $C_n/C_{d'}$, and a $(-2)$-cell $C_n/C_{d'}$.

We then get a $C_n$-CW structure on the smash product by taking a product of the two $C_n$-CW structures, and we get a chain complex which computes $\un{H}_*(X; \un{\cA})(C_n/C_n)$ by taking the total complex of the following double complex:
\[ \xymatrix{
 \un{\cA}(C_n/C_n) \ar[d] & \un{\cA}(C_n/C_{d'}) \ar[d] \ar[l] & \un{\cA}(C_n/C_{d'}) \ar[d] \ar[l] \\
 \un{\cA}(C_n/C_{d'}) \ar[d] & \un{\cA}(\coprod\limits_d C_n/C_{d'}) \ar[d] \ar[l] & \un{\cA}(\coprod\limits_d C_n/C_{d'}) \ar[d] \ar[l] \\
 \un{\cA}(C_n/C_{d'}) & \un{\cA}(\coprod\limits_d C_n/C_{d'}) \ar[l] & \un{\cA}(\coprod\limits_d C_n/C_{d'}) \ar[l]
} \]
Here we have identified $C_n/C_{d'} \times C_n/C_{d'}$ with $\coprod\limits_d C_n/C_{d'}$. If we want to compute $\un{H}_*(X; \un{\cA})(C_n/C_e)$ we can take the product of each of these $C_n$-sets with $C_n/C_e$ before evaluating.

We know what every entry is, and we can compute all the maps. Evaluated on $C_n/C_n$ we get the following:
\[ \xymatrix{
 \bZ\{x^n_q\} \ar[d]_{R^d} & \bZ\{x^{d'}_q\} \ar[d]_\Delta \ar[l]_-{\tr^d} & \bZ\{x^{d'}_q\}\ar[d]_\Delta \ar[l]_-0 \\
 \bZ\{x^{d'}_q\} \ar[d]_0 & \bZ\{x^{d'}_q(j)\}_{0 \leq j < d} \ar[d]_{1-sh_a} \ar[l]_-\nabla & \bZ\{x^{d'}_q(j)\}_{0 \leq j < d} \ar[d]_{1-sh_a} \ar[l]_-{1-sh} \\
 \bZ\{x^{d'}_q\} & \bZ\{x^{d'}_q(j)\}_{0 \leq j < d} \ar[l]_-\nabla & \bZ\{x^{d'}_q(j)\}_{0 \leq j < d} \ar[l]_-{1-sh}
} \]
Here we consider all $q$ which divide $n$ in the upper left corner and all $q$ which divide $d'$ in the rest of the diagram. The maps labelled $\nabla$ are fold maps, the maps labelled $\Delta$ are diagonal maps, $sh$ shifts the index $j$ by $1$ (modulo $d$) and $sh_a$ shifts the index $j$ by $a$ (modulo $d$).

We can compute the homology of the total complex using a small spectral sequence. If we first take homology with respect to the horizontal maps and then with respect to the vertical maps we find that $\un{H}_0(X; \un{\cA}) \cong \prod\limits_{q \mid n} \bZ$ as expected, with two types of generators:

\noindent
\textbf{Type 1}: If $d \nmid q$, the generator $x^n_q \in \un{\cA}(C_n/C_n)$ from the upper left corner survives the spectral sequence.

\noindent
\textbf{Type 2}: If $d \mid q$, the diagonal entry $\sum\limits_{0 \leq j < d} x^{d'}_{q'}(j)$ for $q'=q/d$ from the lower right corner survives the spectral sequence.

The type 2 generators are cycles in the double complex, but the type 1 generators are not because $R^d$ applied to one of the type 1 generators is nozero.

It suffices to consider the generator $x^n_1$ in the upper left corner. We can find an actual generator represented by $x^n_1$ by considering the following zig-zag:
\[ \xymatrix{
 x^n_1 \ar@{|->}[d] & & \\
 x^{d'}_1 & x^{d'}_1(1) \ar@{|->}[l] & \\
 & x^{d'}_1(0) - x^{d'}_1(a) \ar@{|->}[u] & \sum\limits_{0 \leq j < a} x^{d'}_1(j) \ar@{|->}[l]
} \]
In other words, the generator represented by $x^n_1$ in the upper left hand corner in the spectral sequence is the sum of $x^n_1$ in the upper left, $x^{d'}_1(1)$ in the middle, and $\sum\limits_{0 \leq j < a} x^{d'}_1(j)$ in the lower right hand corner.

Now we need to compute $R^e$ of this generator. If $d \nmid e$ then $R^e(x^n_1) = x^{n/e}_1$ in the upper left corner still survives the spectral sequence. So in this case $R^e$ does not pick up any additional factors.

If $d \mid e$ then $C_n/C_{d'} \times C_n/C_e \cong \coprod\limits_d C_n/C_e$ and the upper left transfer map becomes a fold map. Hence $R^e(x^n_1) = x^{n/e}_1$ in the upper left corner is in the image of the horizontal transfer map, and we are left with $R^e(\sum\limits_{0 \leq j < a} x^{d'}_1(j))$ in the lower right hand corner. Because we have $a$ terms we now pick up a factor of $a$. This completes the calculation, and hence the proof.
\end{proof}

Given $a=(a_d)$ we can then consider the smash product $X_a = \bigwedge\limits_{d \mid n} S^{\lambda(n/d) - \lambda(a_d n/d)}$ and we find that $\un{\pi}_0 X \cong \un{\cA}^a$.

\section{Coherence} \label{s:coherence}
In \cite{Du14}, Dugger explains how it is possible to make coherent choices and obtain well defined $\pic(\Gsp)$-graded homotopy groups of a $G$-spectrum as well as an associative and ``graded commutative'' multiplication on $\un{\pi}_\star(S^0)$.

Here we put ``graded commutative'' in quotation marks because we have to keep track of units in $A(G)$ rather than just $\pm 1$.

Let us mix additive and multiplicative notation in the Picard group by writing each $(\bZ/d)^\times/\{\pm 1\}$ multiplicatively and each $\bZ$ additively.

For each
\[
 (a,b) \in \pic(\Csp) \cong \prod_{\substack{d \mid n \\ d \neq 1, 2}} (\bZ/d)^\times/\{\pm 1\} \times \prod_{d \mid n} \bZ
\]
we define $\overline{a} \in \prod\limits_{\substack{d \mid n \\ d \neq 1, 2}} \bZ$ and $X_{\overline{a}}$ as in Section \ref{s:pi0calc}. We also define $Y_b$ as in Section \ref{s:imd}.

As explained in \cite[Section 7]{Du14} we then need to define coherent maps
\[
 X_{\overline{aa'}} \sma Y_{b+b'} \to (X_{\overline{a}} \sma Y_b) \sma (X_{\overline{a'}} \sma Y_{b'}
\]

The coherence maps for the $Y_b$ are easiest: As in \cite[Section 6]{Du14}, they are determined by the canonical maps $S^0 \to S^{-V} \sma S^V$ for $V$ one of $1$, $\sigma$, and $\lambda(n/d)$ for $d \mid n$ and $d \neq 1, 2$.

The coherence maps for the $X_{\overline{a}}$ are slightly more complicated, but we already dealt with that in Section \ref{s:picMackey}. We specified a particular isomorphism $\un{\cA}^a \Box \un{\cA}^{a'} \to \un{\cA}^{aa'}$ for each pair $(a,a')$ and a particular isomorphism $\un{\cA}^a \to \un{\cA}^{\overline{a}}$ for each $a$. These then specify particular isomorphisms $X_a \sma X_{a'} \to X_{aa'}$ and $X_a \to X_{\overline{a}}$.

With these choices, we then get particular maps
\[
 X_{\overline{a}} \sma X_{\overline{a'}} \to X_{\overline{a} \cdot \overline{a'}} \to X_{\overline{aa'}}.
\]
For coherence we have to check that the two maps
\[
 X_{\overline{a}} \sma X_{\overline{a'}} \sma X_{\overline{a''}} \to X_{\overline{aa'a''}}
\]
agree. But this is clear, because on $\un{\pi}_0$ both send $x^1_1 \otimes y^1_1 \otimes z^1_1$ to $w^1_1$ and, if $n$ is even, both send $x^n_1 \otimes y^n_1 \otimes z^n_1$ to $w^n_1 + \sum\limits_{d \geq 2} c_d w^n_d$.

In general, the $G$-equivariant stable homotopy groups of spheres, or of a homotopy commutative $G$-ring spectrum, are graded commutative with units in the Burnside ring. The same is true for $\pic(\Gsp)$-graded homotopy groups:

\begin{thm}
With notation as above, the $\pic(\Csp)$-graded stable homotopy groups of spheres are graded commutative in the sense that if $x \in \pi_{a,b}^G(S)$ and $y \in \pi_{a',b'}^G(S)$ then
\[
 xy = (-1)^{b_1 b_1'} yx
\]
if $n$ is odd and
\[
 xy = (-1)^{b_1 b_1'} (-\tau)^{b_2 b_2'}
\]
for $n$ even, where $\tau = [C_n/C_{n/2}] - [C_n/C_n]$.
\end{thm}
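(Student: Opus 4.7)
The plan is to reduce the graded-commutativity formula to a classical fixed-point degree calculation for swaps of representation spheres, and then to identify the resulting element of $A(C_n)^\times$ via the Burnside ghost map.

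First, I would observe that the commutativity discrepancy is controlled by a unit of $A(C_n) = \pi_0^{C_n}(S)$: since all spectra $X_{(a,b)} = X_{\overline{a}} \sma Y_b$ are invertible, the difference between the two ways of identifying $X_{(aa',b+b')}$ with $X_{(a,b)} \sma X_{(a',b')}$ and with $X_{(a',b')} \sma X_{(a,b)}$ is a self-equivalence, hence an element of $A(C_n)^\times$. Because units of $A(C_n)$ are detected by the Burnside ghost map, it suffices to compute, for each $e \mid n$, the non-equivariant degree of the symmetry isomorphism on $C_e$-fixed points.

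Second, I would show that the $\pic(A(C_n))$ factor $X_{\overline{a}}$ contributes trivially. The spectrum $X_{\overline{a}} = \bigwedge_d S^{\lambda(n/d) - \lambda(a_d n/d)}$ from Section \ref{s:pi0calc} has zero virtual dimension on every fixed-point set, because $\lambda(n/d)$ and $\lambda(a_d n/d)$ have the same kernel $C_{n/d}$ whenever $a_d$ is coprime to $d$. Therefore the swap of $X_{\overline{a}}$ past any other invertible spectrum, and of two such spectra with each other, has degree $+1$ on every fixed-point set and contributes $+1 \in A(C_n)$. The problem reduces to computing the swap $Y_b \sma Y_{b'} \to Y_{b'} \sma Y_b$.

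Third, I would apply the standard formula that the swap $S^V \sma S^W \to S^W \sma S^V$ has fixed-point degree $(-1)^{\dim V^H \dim W^H}$ on $H$-fixed points. Since $\lambda(n/d)^H$ is always even-dimensional, the $\lambda$-summands contribute $+1$, and only the trivial and sign summands affect the sign. For $n$ odd, only the trivial summand is present, yielding $(-1)^{b_1 b_1'}$ on every fixed-point set and hence the unit $(-1)^{b_1 b_1'} \in A(C_n)$. For $n$ even, the degree on $C_e$-fixed points is $(-1)^{b_1 b_1'}$ when $C_e \not\leq C_{n/2}$ and $(-1)^{(b_1+b_2)(b_1'+b_2')}$ when $C_e \leq C_{n/2}$. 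Comparing these ghost coordinates to those of $\tau = [C_n/C_{n/2}] - [C_n/C_n]$ (which are $+1$ on $C_e \leq C_{n/2}$ and $-1$ otherwise) produces the advertised unit $(-1)^{b_1 b_1'} (-\tau)^{b_2 b_2'}$, provided the cross-term contributions $b_1 b_2' + b_2 b_1'$ are absorbed by the Koszul shuffle built into the coherent isomorphism $Y_{b+b'} \to Y_b \sma Y_{b'}$ fixed in Section \ref{s:coherence}.

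The main obstacle will be the careful bookkeeping of these coherent isomorphisms: the unit maps $S^0 \to S^{-V} \sma S^V$ for $V$ irreducible determine a specific shuffle when passing $Y_{b+b'}$ to $Y_b \sma Y_{b'}$, and one has to verify that the shuffle signs coming from moving $S^{b_1}$ past $S^{b_2' \sigma}$ and $S^{b_2 \sigma}$ past $S^{b_1'}$ exactly cancel the corresponding cross terms in the raw Koszul swap, so that the final answer simplifies to the clean expression stated in the theorem.
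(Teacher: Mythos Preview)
Your approach is correct and close in spirit to the paper's, but organized differently. The paper computes, for each irreducible $V$, the single unit $\theta_V \in A(C_n)^\times$ given by the composite
\[
 S^0 \xto{\eta} S^{-V} \sma S^V \xto{t} S^V \sma S^{-V} \xto{\hat\eta} S^0,
\]
and then reads off the commutativity sign as $\prod_V \theta_V^{b_V b'_V}$ (this is Dugger's framework, the prototype being \cite[Prop.~1.18]{Du14}). This sidesteps your cross-term bookkeeping entirely: only diagonal contributions appear, and one checks $\theta_1 = -1$, $\theta_\sigma = -\tau$, and $\theta_{\lambda(i)} = 1$ by computing degrees on the underlying spectrum and on geometric $C_n$-fixed points---two checks suffice since $|A(C_n)^\times| \leq 4$. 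Your route, computing the raw swap degree on every $C_e$-fixed-point set via the ghost map and then arguing that the cross terms $(-1)^{b_1 b_2' + b_2 b_1'}$ are cancelled by the shuffle hidden in the coherence map $Y_{b+b'} \to Y_b \sma Y_{b'}$, also works; but the cancellation you flag as the ``main obstacle'' is precisely what the per-irreducible trace computation bypasses. For the $X_{\overline{a}}$ factor, the paper argues instead that the isomorphisms $X_a \to X_{\overline{a}}$ were pinned down in Section~\ref{s:picMackey} by the conventions $x^1_1 \mapsto y^1_1$ (and $x^n_1 \mapsto y^n_1 + \ldots$ for $n$ even), so they introduce no extra unit; your zero-virtual-dimension argument reaches the same conclusion by a different route.
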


\begin{proof}
This is folklore for $RO(C_n)$-graded homotopy groups. See e.g.\ \cite[Prop.\ 1.18]{Du14} for a related result. In the $RO(C_n)$-equivariant context we can compute the ``signs'' in the units of $A(C_n)$ by computing the trace of the identity element on each $S^V$ for $V$ irreducible. For $S^1$ with a trivial action we pick up the usual $-1$.

If $V=\lambda(i)$ for some $i=1,\ldots,\lfloor \frac{n-1}{2} \rfloor$ then the composite
\[
 S^0 \xto{\eta} S^{-V} \sma S^V \xto{t} S^V \sma S^{-V} \xto{\hat{\eta}} S^0
\]
has degree $1$ in the underlying non-equivariant category. This shows that $x^1_1 \mapsto x^1_1$ on $\un{\pi}_0$. If $n$ is odd this suffices. If $n$ is even, we can also determine whether $x^n_1 \mapsto x^n_1+\ldots$ or $x^n_1 \mapsto -x^n_1+\ldots$ by taking geometric $C_n$-fixed points. But after taking geometric fixed points the above composite becomes
\[
 S^0 \to S^0 \sma S^0 \xto{t} S^0 \sma S^0 \to S^0,
\]
which again has degree $1$ and not $-1$.

For $S^\sigma$, the trace has degree $-1$ in the underlying non-equivariant category and degree $1$ after taking geometric $C_n$-fixed points. This shows that $x^1_1 \mapsto -x^1_1$ and $x^n_1 \mapsto x^n_1 + \ldots$, which is exactly what multiplication by $-\tau$ does.

To complete the proof, it suffices to observe that none of the isomorphisms $X_a \to X_{\overline{a}}$ do anything unexpected on $\un{\pi}_0$.
\end{proof}

\section{Cohomological Mackey functors and the Picard group} \label{s:cohomMackey}
Recall that a Mackey functor $\un{M}$ is \emph{cohomological} if $\tr^H_K \circ R^H_K$ is multiplication by the index $[H:K]$ for all subgroups $K \leq H \leq G$.

When restricted to the subcategory of cohomological $G$-Mackey functors, $\Box$ is a symmetric monoidal product with unit $\un{\bZ}$ (rather than $\un{\cA}_G$, which is not cohomological).

\begin{proof}[Proof of Theorem \ref{t:B}]
Suppose $[X]$ is in the kernel of $d : \pic(\Gsp) \to C(G)$ and let $\un{M}$ be a cohomological Mackey functor.

To show that $X \sma H\un{M} \simeq H\un{M}$ it suffices to show that $\un{\pi}_0 (X \sma H\un{M}) \cong \un{\pi}_0(X) \Box \un{M}$ is isomorphic to $\un{M}$. Because $\un{M}$ is cohomological we have $\un{\pi}_0(X) \Box \un{M} \cong \un{\pi}_0(X) \Box H\un{\bZ} \Box \un{M}$, so it suffices to show that $\un{\pi}_0(X) \Box H\un{\bZ}$ is isomorphic to $\un{\bZ}$.

Chooose $Y$ with $X \sma Y \simeq S^0_G$. Then $\un{M}_1 = \un{\pi}_0(X) \Box \un{\bZ}$ is invertible with inverse $\un{M}_2 = \un{\pi}_0(Y) \Box \un{\bZ}$. So it suffices to show that the only invertible $\un{\bZ}$-module is $\un{\bZ}$ itself. In other words, it suffices to show that the Picard group of the category of $\un{\bZ}$-modules is trivial.

First observe that if $\un{M}_1$ is an invertible $\un{\bZ}$-module then $\un{M}_1(G/H) \cong \bZ$ for all $H \leq G$, and $\un{M}_1$ is determined by the restriction maps $R^H_K : \un{M}_1(G/H) \cong \bZ \to \un{M}_1(G/K) \cong \bZ$ for $K \leq H \leq G$. These must satisfy $R^H_K \mid [H:K]$, plus a compatibility condition coming from the formula for $R^H_L \circ \tr_K^H$ for each pair $K,L \leq H$.

We first consider the case when $G$ is a $p$-group. Let $P_1,\ldots,P_n$ denote the index $p$ subgroups. Denote the restriction map $\Mack_G \to \Mack_H$ by $\downarrow^G_H$. By induction we can assume $\downarrow^G_{P_i} \un{M}_1 \cong \un{\bZ}$. Then $R^G_{P_j} \tr_{P_i}^G$ is always multiplication by $p$, so either all the $R^G_{P_i}$ are $\pm 1$ and $\un{M}_1 \cong \un{\bZ}$ or all the $R^G_{P_i}$ are $\pm p$. But if all the $R^G_{P_i}$ are $\pm p$ then that will remain true in $\un{M}_1 \Box \un{M}_2$, so $\un{M}_1$ is not invertible.

Finally we consider an arbitrary finite group $G$ with Sylow subgroups $P_1,\ldots,P_n$ of order $p_i^{r_1},\ldots,p_n^{r_n}$. Again we can assume by induction that $\downarrow^G_{P_i} \un{M}_1 \cong \un{\bZ}$ for each $i$. But then $R^G_{P_i}$ is multiplication by the same integer $N$ for each $i=1,\ldots,n$. Now $N \mid p_i^{r_i}$ for each $i$, so $N = \pm 1$ and again we conclude that $\un{M}_1 \cong \un{\bZ}$.
\end{proof}


\end{document}